\theoremstyle{plain} 
\newtheorem{thm}{ Theorem}[section]
\newtheorem{lem}[thm]{Lemma}
\newtheorem{prop}[thm]{Proposition}
\newtheorem{cor}[thm]{Corollary}
\newtheorem{defn}[thm]{Definition}
\newtheorem{ex}[thm]{Example}
\newtheorem* {note*}{Note}
\newtheorem{qu}{\bf Question}
\newtheorem{con}{\bf Conjecture}
\newcommand{\co} {\mathbb{C}}
\newcommand{\iy} {\infty}
\newcommand{\N} {\mathbb{N}}
\newcommand{\Z} {\mathbb{Z}}
\newcommand{\lm}{\lambda}
\newcommand{\h}{\mathcal H}
\newcommand{\B}{\mathcal B}
\newcommand{\BH}{\mathcal B(\mathcal H)}
\newcommand{\norm}[1]{\left\Vert#1\right\Vert}
\newcommand{\set}[1]{\left\{#1\right\}}
\newcommand{\brc}[1]{\left(#1\right)}
\newcommand{\LZ}{\ell^{2}(\mathbb Z)}
\newcommand{\LN}{\ell^{2}(\mathbb N)}
\newcommand{\LNp}{\ell^{p}(\mathbb N)}
\newcommand{\m}{\mathcal {M}}
\title{\bf \Large Some properties of subspaces-hypercyclic operators}
\author[1]{\bf\footnotesize Nareen Sabih \thanks{nareen\_bamerni@yahoo.com}}
\author[2]{\bf Adem K{\i}l{\i}\c{c}man \thanks{akilicman@yahoo.com}}
\affil[1,2]{\bf Department of Mathematics, University Putra Malaysia,
43400 UPM, Serdang, Selangor, Malaysia}
\begin{document}
\date{}
\maketitle
\begin{abstract}
In this paper, we answer a question posed in the introduction of \cite{sub hyp} positively, i.e, we show that if $T$ is $\m$-hypercyclic operator with $\m$-hypercyclic vector $x$ in a Hilbert space $\h$, then $P(Orb(T,x))$ is dense in the subspace $\m$ where $P$ is the orthogonal projection onto $\m$. Furthermore, we give some relations between  $\m^{\perp}$-hypercyclicity and the orthogonal projection onto  $\m^{\perp}$.  We also give sufficient conditions for a bilateral weighted shift operators on a Hilbert space $\LZ$ to be subspace-hypercyclic, cosequently, there exists an operator $T$ such that both $T$ and $T^*$ are subspace-hypercyclic operators. Finally, we give an $\m$-hypercyclic criterion for an operator $T$ in terms of its eigenvalues.
\end{abstract}
\section{Introduction}
A bounded linear operator $T$ on a separable Hilbert space $\h$ is hypercyclic if there is a vector $x\in \h$ such that $Orb(T,x)=\set{T^nx:n\ge 0}$ is dense in $\h$, such a vector $x$ is called  hypercyclic for $T$. The first example of a hypercyclic operator on a Banach space was constructed by Rolewicz in 1969 \cite{Rolewicz}. He showed that if $B$ 
is the backward shift on $\LNp$ then $\lm B$ is hypercyclic if and only if $|\lm|> 1$. The hypercyclicity concept was probabely born with the thesis of Kitai in 1982 \cite{Kitai} who introduced the hypercyclic criterion to ensure the existence of hypercyclic operators. For more information on hypercyclic operators we refer the reader to \cite{dynamic, Erdman} \\

In 2011,  Madore and Mart\'{i}nez-Avenda\~{n}o \cite{sub hyp} studied the density of the orbit in a non-trivial subspace instead of the whole space, this phenomenon is called   the subspace-hypercyclicity. For the series of references on subspaces-hypercyclic operators see \cite{Some ques, C.M, sub hyp, notes on sub}

\begin{defn}\cite{sub hyp}
Let $T\in \BH$ and $\m$ be a closed subspace of $\h$. Then $T$ is called $\m$-hypercyclic or subspace-hypercyclic operator for a subspace $\m$  if there exists a vector $x\in \h$ such that $Orb(T,x)\cap \m $ is dense in $\m$. We call $x$ an $\m$-hypercyclic vector for $T$.
\end{defn}

\begin{defn}\cite{sub hyp}
Let $T \in \BH$ and $\m$ be a closed subspace of $\h$. Then $T$ is called $\m$-transitive or subspace-transitive for a subspace $\m$ if for each pair of non-empty open sets $U_1,\,U_2$ of $\m$ there exists an $n \in \N$ such that $T^{-n}U_1 \cap U_2$ contains a non-empty relatively open set in $\m$.
\end{defn}

\begin{thm}\cite{sub hyp}\label{MTD}
Every $\m$-transitive operator on $\h$ is $\m$-hypercyclic. 
\end{thm}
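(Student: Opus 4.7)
The plan is a Baire category argument carried out inside the subspace $\m$ itself, mirroring the classical Birkhoff transitivity proof but performed in $\m$ with its relative topology rather than in all of $\h$. The starting observation is that $\m$, being a closed subspace of a separable Hilbert space, is itself a separable complete metric space (a Polish space), so the Baire category theorem is available. Fix a countable basis $\{V_k\}_{k\in\N}$ for the relative topology on $\m$.

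For each $k$, define
\[
E_k \;=\; \bigl\{x\in\m:\ \text{there exists } n\ge 0 \text{ with } T^n x\in V_k\bigr\}
\;=\; \m\cap \bigcup_{n\ge 0} T^{-n}(V_k).
\]
The first step is to note that $E_k$ is open in $\m$: since $T^n$ is continuous on $\h$, each $T^{-n}(V_k)$ is open in $\h$, so its intersection with $\m$ is open in the relative topology, and $E_k$ is a union of such sets.

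The second, and key, step is to show each $E_k$ is dense in $\m$. Given any nonempty relatively open $U\subset \m$, apply $\m$-transitivity to the pair $(V_k,U)$: there is an $n\in\N$ such that $T^{-n}(V_k)\cap U$ contains a nonempty relatively open set $W\subset \m$. Every $x\in W$ lies in $U$ and satisfies $T^n x\in V_k$, so $x\in E_k$; hence $E_k\cap U\neq\emptyset$. This is exactly where the definition of $\m$-transitivity is used — the fact that the intersection is not merely nonempty but contains a relatively open piece of $\m$ guarantees that the witnesses $x$ can be chosen in $\m$, which is crucial for the argument to close up inside the Polish space $\m$.

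By the Baire category theorem applied in $\m$, the set $\bigcap_k E_k$ is dense in $\m$, and in particular nonempty. Any $x\in\bigcap_k E_k$ has the property that $Orb(T,x)$ meets every basic open set $V_k\subset \m$; hence $Orb(T,x)\cap\m$ is dense in $\m$, and $x$ is an $\m$-hypercyclic vector (lying, as a bonus, in $\m$). The main obstacle I anticipate is conceptual rather than technical: one must resist the temptation to run Baire in the ambient space $\h$ (where transitivity gives no information) and instead work strictly inside $\m$, using the ``contains a relatively open set'' clause in the definition of $\m$-transitivity to keep the density argument honest.
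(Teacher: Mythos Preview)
The paper itself does not prove this theorem; it is merely quoted from \cite{sub hyp}, so there is no in-paper proof to compare against. Your Baire-category argument inside $\m$ is exactly the approach of Madore and Mart\'{i}nez-Avenda\~{n}o, and the overall strategy is correct.

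There is, however, one genuine slip in the ``$E_k$ is open'' step. You assert that $T^{-n}(V_k)$ is open in $\h$ because $T^n$ is continuous. But $V_k$ is only \emph{relatively} open in $\m$, not open in $\h$; writing $V_k=G_k\cap\m$ with $G_k$ open in $\h$, one has $T^{-n}(V_k)=T^{-n}(G_k)\cap T^{-n}(\m)$, and $T^{-n}(\m)$ is closed, not open. Concretely, if $x\in\m$ satisfies $T^nx\in V_k\subset\m$, a nearby $y\in\m$ will have $T^ny$ close to $T^nx$ in $\h$, but there is no a priori reason $T^ny$ should land in $\m$, so $T^ny$ need not lie in $V_k$. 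Thus $E_k$ need not be open in $\m$ as written.

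The fix is already implicit in your density paragraph. The $\m$-transitivity hypothesis produces, for every nonempty relatively open $U\subset\m$, a nonempty relatively open $W\subset U$ with $W\subset E_k$. Hence the \emph{relative interior} $\mathrm{int}_{\m}(E_k)$ is a dense open subset of $\m$, and Baire applied to $\bigcap_k \mathrm{int}_{\m}(E_k)$ gives what you want. Equivalently, one first observes (as in \cite[Proposition~3.1]{sub hyp}) that whenever $T^{-n}(V)\cap U$ contains a nonempty relatively open subset of $\m$ one necessarily has $T^n\m\subset\m$; with this invariance in hand, $T^n|_{\m}:\m\to\m$ is continuous and your original openness argument goes through verbatim.
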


\begin{thm}\cite{m2}\label{hm}
Every hypercyclic operator is subspace-hypercyclic for a subspace $\m$ of $\h$.
\end{thm}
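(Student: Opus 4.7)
My plan is to observe that the definition of $\m$-hypercyclicity imposes no condition forcing $\m$ to be a proper subspace of $\h$, so the whole space itself qualifies as an admissible choice for $\m$. Concretely, given that $T$ is hypercyclic on $\h$ with hypercyclic vector $x$, one has by definition that $Orb(T,x)$ is dense in $\h$. Taking $\m := \h$, the intersection $Orb(T,x)\cap \m$ coincides with $Orb(T,x)$ and is therefore dense in $\m$. Hence $x$ itself is an $\m$-hypercyclic vector for $T$, so $T$ is $\m$-hypercyclic, which is precisely the statement of the theorem. The whole argument reduces to unpacking the two definitions.

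The only substantive step is verifying that Definition 1.1 genuinely admits $\m = \h$ as a closed subspace of $\h$; once this is noted, no further work is needed, and the theorem simply records that hypercyclicity is a special case of subspace-hypercyclicity. If the theorem is instead meant to guarantee a \emph{proper} closed subspace $\m$, the strategy would be quite different: one would try to extract a carefully thinned subsequence $\{T^{n_k}x\}$ of the orbit (for instance using that a hypercyclic orbit has $0$ as an accumulation point, or invoking the hypercyclicity criterion with appropriate dense sets $D_1, D_2$) and take $\m$ to be its closed linear span, then try to show that some vector $y$ has orbit meeting $\m$ densely along this subsequence. The main obstacle in that stronger formulation would be keeping $\m$ proper: density of the full orbit in $\h$ makes it very easy for the closed span of any reasonably rich subsequence of the orbit to coincide with $\h$, so the thinning would have to be aggressive enough to shrink the span while still leaving enough terms to approximate every element of $\m$.
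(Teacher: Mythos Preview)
The paper does not prove this theorem in the present text; it is quoted from the authors' companion paper \cite{m2}, so there is no in-text argument to compare against directly. Your trivial observation that $\m=\h$ satisfies Definition~1.1 is formally correct, and you are right that nothing in the stated definition excludes it. However, the intended content of Theorem~\ref{hm} is the stronger claim for a \emph{non-trivial} (proper) closed subspace: the introduction explicitly says that subspace-hypercyclicity concerns ``density of the orbit in a non-trivial subspace instead of the whole space,'' and the only place the theorem is invoked (Example~\ref{adjoint}) would be vacuous if $\m_1=\m_2=\h$ were permitted, since the subsequent Question and Conjecture about the relation between $\m_1$ and $\m_2$ would then have no content. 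So your first paragraph, while technically sound against the letter of Definition~1.1, does not establish the result the paper is actually relying on.

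You correctly anticipate this in your second paragraph, and the obstacles you identify there are the genuine ones: one must thin the orbit (or otherwise select a subsequence) so that its closed linear span is proper, yet the orbit still meets that span densely. Since the actual argument lives in \cite{m2} and is not reproduced here, I cannot confirm whether your sketched strategy matches it, but your diagnosis of where the difficulty lies is accurate.
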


In the present paper, we answer positively a question posed by Madore and Mart\'{i}nez-Avenda\~{n}o in the introduction of \cite {sub hyp}. In particular, Theorem \ref{proj} shows that if $P$ is the orthogonal projection onto a subspace $\m$, then $P(Orb(T,x)$ will be dense in $\m$ whenever $T$ is $\m$-hypercyclic. Furthermore, we give some relations between $\m^{\perp}$-hypercyclicity and the orthogonal projection onto  $\m^{\perp}$. \\
Through Theorem \ref{5}, we give a set of sufficient conditions for a bilateral weighted shift on a Hilbert space $\LZ$ to be $\m$-hypercyclic operator. Like the hypercyclicity case, Example \ref{adjoint} shows that there is an operator $T$ such that both $T$ and its adjoint are subspace-hypercyclic; however, we dont know whether they are subspace-hypercyclic for the same subspace or not.\\ 
According to which an operator having a large supply of eigenvectors is hypercyclic, Godefroy-Shapiro \cite{28} exhibited a hypercyclic criterion which is called ``Godefroy-Shapiro Criterion". We extend such a criterion to a subspaces and we call it ``Spectrum $\m$-hypercyclic Criterion". We give the Example \ref{3} to show that the $\m$-hypercyclic criterion is a stronger result than Spectrum $\m$-Hypercyclic Criterion. \\

\section{Main results}

The set of all $\m$-hypercyclic operators is denoted by $HC(\h,\m)$ and the set of all $\m$-hypercyclic vectors of $T$ is denoted by $HC(T,\m)$. \\
To prove the following results, the reader should be convenient with the properties of the projection map $P$, see \cite{conway}.\\
In the introduction of \cite{sub hyp} , the authors asks the possibility of $P(Orb(T,x))$ to be dense in $\m$ where $P$ is the orthogonal projection onto $\m$. 
\begin{thm}\label{proj}
If $x\in HC(T,\m)$ and $P:\h \rightarrow \m$ is the orthogonal projection onto $\m$, then $P\brc{Orb(T,x)}\cap \m$ is dense in $\m$.
\end{thm}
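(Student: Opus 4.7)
The plan is to observe that the containment goes the right way almost by definition of the projection, so the density of the bigger set follows immediately from the density of the smaller one.

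First, I would note that since $P$ maps $\h$ into $\m$, the set $P(Orb(T,x))$ already sits inside $\m$, so the intersection $P(Orb(T,x))\cap \m$ in the statement coincides with $P(Orb(T,x))$ itself; this just rules out any worry about points being thrown out of $\m$.

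Next I would use the idempotency of $P$ on $\m$: if $y\in\m$ then $Py=y$. Applying this to $y=T^n x$ whenever $T^n x\in\m$ gives
\[
Orb(T,x)\cap\m \;\subseteq\; P\brc{Orb(T,x)},
\]
because every element of the left-hand set is fixed by $P$ and hence lies in the image under $P$ of an element of $Orb(T,x)$.

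Finally, by the assumption $x\in HC(T,\m)$, the set $Orb(T,x)\cap\m$ is dense in $\m$. Any superset of a dense subset of $\m$ (that is contained in $\m$) is again dense in $\m$, so $P(Orb(T,x))$ is dense in $\m$, which is exactly the conclusion. There is no real obstacle here; the content is just unwinding the definitions of $\m$-hypercyclicity and orthogonal projection, which is presumably why Madore and Mart\'inez-Avenda\~no phrased it as a question rather than a result — the interest lies in the converse direction (treated separately in the paper), not in this implication.
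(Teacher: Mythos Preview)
Your proof is correct and essentially identical to the paper's: both argue that $Orb(T,x)\cap\m\subseteq P(Orb(T,x))$ because $P$ fixes elements of $\m$, and then conclude density of the larger set from density of the smaller one. The paper just phrases the containment via an explicit subsequence $\{T^{n_k}x\}$ rather than invoking idempotency by name, but the content is the same.
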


\begin{proof}
Since $x\in HC(T,\m)$, then there exist a sequence $\set{n_k}$ of positive numbers such that $Orb(T,x) \cap \m= \set{T^{n_k}x:k\in \N}$ is dense in $\m$. Since $$\set{T^{n_k}x:k\in \N} \subseteq P\brc{Orb(T,x)} \cap \m,$$ then $P\brc{Orb(T,x)}\cap \m$ is dense in $\m$.
\end{proof}
The following examples gives an application to Theorem \ref{proj}

\begin{ex}
Let $B$ be the backward shift on $\LN$ and $\m$ be the subspace of $\LN$ consisting of all sequences with zeroes on the even entries; that is
$$\m=\set{\set{x_n}_{n=0}^\iy \in \LN: x_{2k}=0 \mbox { for all } k}$$
Since $2B\in HC(\m,\h)$ see \cite[Example 3.8.]{sub hyp}, then there exists an $x\in HC(2B,\m)$. We may, and will, assume that $x=\brc{a_0,0,a_1,0,\ldots} \in \m$, then for all $n\ge 0$, 

$$(2B)^{2n+1}x=2^{2n+1}\brc{0,a_{2n+2},0,a_{2n+4},\ldots}$$ 
and  
$$(2B)^{2n}x=2^{2n}\brc{a_{2n},0,a_{2n+2},0,a_{2n+4},\ldots}.$$

Therefore 
\begin{equation}\label{1}
Orb(2B,x) \cap \m=(2B)^{2n}x \mbox{ is dense in } \m 
\end{equation}
Moreover

\begin{equation*}
P\brc{Orb(T,x)}=
\begin{cases} 0 & \text{if $n$ is odd,}
\\
(2B)^{n}x &\text{if $n$ is even.}
\end{cases}
\end{equation*}
Thus, it is clear from  eq. (\ref{1}) that $(2B)^{2n}x \subset P\brc{Orb(T,x)}\cap \m=(2B)^{2n}x \cup \set{0}$ for all $n\ge 0$ and this gives the proof.
\end{ex}
\begin{prop}
Let $\m^{\perp}$ be an invariant subspace under $T$,  $P$ be the orthogonal projection onto $\m^{\perp}$ and $T\in HC(\h,\m^{\perp})$, then 
\begin{enumerate}
\item $S\in HC(\h,\m^{\perp})$ where $S=PT:\m^{\perp} \rightarrow \m^{\perp}$,
\item $\bar{T}\in HC(\h/\m)$ where $\bar{T}:\h/\m \rightarrow \h/\m$.
\end{enumerate}
\end{prop}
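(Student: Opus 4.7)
For part~(1), the first observation is that since $\m^{\perp}$ is $T$-invariant, $Tv\in\m^{\perp}$ for every $v\in\m^{\perp}$, so $PTv=Tv$. Hence $S=PT$, read as a map on $\m^{\perp}$, coincides with $T|_{\m^{\perp}}$, and the claim reduces to showing that $T|_{\m^{\perp}}$ is hypercyclic on $\m^{\perp}$ in the ordinary sense.

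The key step I would use is that invariance forces the orbit to remain in $\m^{\perp}$ once it has entered: if $T^{n_0}x\in\m^{\perp}$, then $T^{n}x=T^{n-n_0}(T^{n_0}x)\in\m^{\perp}$ for every $n\ge n_0$. Fix $x\in HC(T,\m^{\perp})$; since $Orb(T,x)\cap\m^{\perp}$ is dense in $\m^{\perp}$ and in particular nonempty (assuming $\m^{\perp}\neq\{0\}$), a smallest such $n_0$ exists, and so $Orb(T,x)\cap\m^{\perp}=\{T^{m}x:m\ge n_0\}$. Setting $y:=T^{n_0}x\in\m^{\perp}$, one reads off $Orb(S,y)=\{T^{n}y:n\ge 0\}=\{T^{m}x:m\ge n_0\}$, which is dense in $\m^{\perp}$. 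Hence $y$ witnesses the hypercyclicity of $S$.

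For part~(2), my proposal is to let $\pi:\h\to\h/\m$ denote the quotient map and to show that $\pi(x)$ is hypercyclic for $\bar T$. Iteration yields $\bar T^{n}\pi(x)=\pi(T^{n}x)$, and under the canonical isometric identification $\h/\m\cong\m^{\perp}$ the quotient norm satisfies $\|\pi(v)\|=\|Pv\|$. Consequently, density of $\{\pi(T^{n}x):n\ge 0\}$ in $\h/\m$ is equivalent to density of $\{PT^{n}x:n\ge 0\}$ in $\m^{\perp}$, and the latter is precisely the content of Theorem~\ref{proj} applied to $x$.

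The only technical subtlety I anticipate lies in part~(2), namely the well-definedness of $\bar T$: the formula $\bar T([x])=[Tx]$ requires $T(\m)\subseteq\m$, while the stated hypothesis only provides $T(\m^{\perp})\subseteq\m^{\perp}$. I would either read the proposition as tacitly assuming $\m$ is also $T$-invariant (so that $\m^{\perp}$ is reducing), or else define $\bar T$ intrinsically through the identification $\h/\m\cong\m^{\perp}$ together with $S=T|_{\m^{\perp}}$; under the latter reading, part~(2) is merely a transport of part~(1) across the isometry $\pi|_{\m^{\perp}}$. Apart from this caveat, the only genuinely new idea in the argument is the ``enters and stays'' observation used in part~(1); the rest is a direct application of Theorem~\ref{proj} together with the fact that $P$ is the identity on $\m^{\perp}$.
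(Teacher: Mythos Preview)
Your proposal is correct and follows essentially the same route as the paper: both reduce part~(1) to the observation that, by invariance of $\m^{\perp}$, one may replace the hypercyclic vector $x$ by an iterate lying in $\m^{\perp}$ (the paper simply asserts ``without loss of generality $x\in\m^{\perp}$'', which your ``enters and stays'' argument actually justifies), after which $S$ coincides with $T|_{\m^{\perp}}$ and its orbit is the dense set $Orb(T,x)\cap\m^{\perp}$; and both obtain part~(2) by transporting along the identification $\h/\m\cong\m^{\perp}$, the paper citing part~(1) where you cite Theorem~\ref{proj}. Your remark that the well-definedness of $\bar T$ on $\h/\m$ requires $T(\m)\subseteq\m$ rather than the stated hypothesis $T(\m^{\perp})\subseteq\m^{\perp}$ is a valid caveat that the paper does not address.
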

\begin{proof}
{\bf (1):} Let $x\in HC(T,\m^{\perp})$ where $x=u+v$; $u\in \m$ and $v\in \m^{\perp}$, then $Px=v$. Without loss of generality, we may assume that $x\in \m^{\perp}$, then 
\begin{eqnarray*}
Orb(PT,v)\cap \m^{\perp}&=&\set{v,PTv,(PT)^2v,\ldots}\cap \m^{\perp}\\
&=&\set{v,PTv,PT^2v,\ldots}\cap \m^{\perp}\\
&=&\set{Px,PTPx,PT^2Px,\ldots}\cap \m^{\perp}\\
&=&\set{Px,PTx,PT^2x,\ldots}\cap \m^{\perp}\\
&=& P\brc{ Orb(T,x)}\cap \m^{\perp}\\
&=& P\brc{Orb(T,x)}
\end{eqnarray*}
Since $P$ is continuous, then
\begin{eqnarray*}
\overline{\brc{ Orb(PT,v)\cap \m^{\perp}}}&=& \overline{P\brc{ Orb(T,x)}}\\
&\supseteq& P\brc{\overline{ Orb(T,x)}}\\
&\supseteq& \m^{\perp}.
\end{eqnarray*}
Thus $PT\in HC(\h,\m^{\perp})$.\\

{\bf (2):}
Without loss of generality, we may assume that $x\in \m^{\perp}$. By part (1), $PT\in HC(\h,\m^{\perp})$ where $PT:\m^{\perp} \rightarrow \m^{\perp}$ and since $(PT)^nx = PT^nx$ for all $n\ge 0$, then $P\brc{ Orb(T,x)}$ is dense in $\m^{\perp}$. Now, we will show that $x+\m \in HC(\bar{T})$,
\begin{eqnarray*}
\overline{\set{(\bar{T}^n(x+\m)): n\ge 0}}&=&\overline{\set{ (T^nx+\m): n\ge 0}}\\
&=&\overline{\set{PT^nx:n\ge 0}}+\m, \mbox{ since } T^n x\in  \m^{\perp}\\
&=&\overline{P( Orb(T,x)}+\m\\
&=&\m^{\perp}/\m=\h/\m.
\end{eqnarray*}
\end{proof}
The next Theorem gives sufficient conditions for a bilateral weighted shift operator on the Hilbert space $\LZ$ to satisfy the $\m$-hypercyclic criterion. We will suppose that 
$$\B=\set{e_{m_r}: r\in \N}$$ 
is a Schauder basis for $\m$, where $m_r\in \Z$. Let $T$ be the bilateral forward weighted shift operator with a weigh sequence $w_n$, then  
$T(e_{m_r})=w_{m_r}e_{m_r+1}$ for all $r\in \N$. We may define a right inverse (backward shift) $S$  to $T$ as follows: $S(e_{m_r})=\frac{1}{w_{m_r-1}}e_{m_r-1}$. Observe that $TSe_{m_r}=e_{m_r}$ for all $r\in \N$. If $T$ is invertable then $T^{-1}=S$. Also note that for all $r\in \N$ and $k\ge 0$, we have

$$\displaystyle T^k(e_{m_r})=\brc{\prod_{j=m_r}^{m_r+k-1}w_j}e_{m_r+k} \mbox{	and } \displaystyle S^k(e_{m_r})=\brc{\prod_{j=m_r-1}^{m_r-k}\frac{1}{w_{j}}}e_{m_r-k}$$
First we need the following lemma.

\begin{lem}\label{5}
Suppose that $T$ is an invertable bilateral weighted shift, $\set{n_k}$ is a sequence of positive integers, $n_k \to \iy$ and $\m$ is closed subspace such that $T^{n_k}\m \subseteq \m$. If there exists an $i\in \N$ such that $T^{n_k}e_{m_i}\to 0$ as $k\to \iy$, then $T^{n_k}e_{m_r}\to 0$ for all $r\in \N$. 
\end{lem}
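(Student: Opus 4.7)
The plan is to exploit the invertibility of $T$ together with the fact that any fixed power of $T$ (or of $S=T^{-1}$) commutes with $T^{n_k}$. Fix the index $i \in \N$ guaranteed by hypothesis, and let $r \in \N$ be arbitrary. The key observation is that, because $T$ is an invertible bilateral weighted shift with all weights nonzero, the basis vector $e_{m_r}$ can always be written as a nonzero scalar multiple of $T^{m_r-m_i}e_{m_i}$ when $m_r \ge m_i$, or of $S^{m_i-m_r}e_{m_i}$ when $m_r < m_i$.

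In the case $m_r \ge m_i$, the product formula stated just before the lemma gives $T^{m_r-m_i}e_{m_i} = \brc{\prod_{j=m_i}^{m_r-1}w_j}e_{m_r}$, so $e_{m_r} = c_r\, T^{m_r-m_i}e_{m_i}$ for a nonzero constant $c_r$. Applying $T^{n_k}$ and using that powers of $T$ commute,
\[
T^{n_k}e_{m_r} = c_r\, T^{m_r-m_i}\brc{T^{n_k}e_{m_i}},
\]
which yields $\norm{T^{n_k}e_{m_r}} \le \abs{c_r}\,\norm{T}^{m_r-m_i}\,\norm{T^{n_k}e_{m_i}} \to 0$. The case $m_r < m_i$ is entirely analogous, with $S$ playing the role of $T$: the corresponding product formula for $S$ expresses $e_{m_r}$ as a nonzero scalar multiple of $S^{m_i-m_r}e_{m_i}$, and since $S=T^{-1}$ commutes with $T^{n_k}$, the same estimate with $\norm{S}^{m_i-m_r}$ in place of $\norm{T}^{m_r-m_i}$ gives $T^{n_k}e_{m_r}\to 0$.

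I do not anticipate a genuine obstacle here. The essential content is that $T^{\pm(m_r-m_i)}$ is a fixed bounded operator commuting with $T^{n_k}$, so convergence of $T^{n_k}e_{m_i}$ to zero propagates automatically to every other basis vector $e_{m_r}$. The only care required lies in the bookkeeping of the product formulas and in invoking boundedness of $S$, which is guaranteed by the invertibility of $T$. Note that the hypothesis $T^{n_k}\m \subseteq \m$ plays no role in this step; it is a constraint on $\m$ that will be used when the lemma is applied to verify an $\m$-hypercyclic criterion.
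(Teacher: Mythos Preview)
Your argument is correct and is precisely the approach the paper intends: the paper's proof simply cites \cite[Lemma 3.1]{H C Bi.}, whose proof is exactly the commutation argument you give (write $e_{m_r}$ as a fixed scalar multiple of $T^{\pm(m_r-m_i)}e_{m_i}$ and use boundedness of that fixed power). Your remark that the invariance hypothesis $T^{n_k}\m\subseteq\m$ is not used in the lemma itself is also accurate; it is invoked only when applying the lemma inside the $\m$-hypercyclic criterion.
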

\begin{proof}
Since $T^{n_k}\m \subseteq \m$, the proof is similar to the proof of \cite[Lemma 3.1.]{H C Bi.}.
\end{proof}
\begin{thm} \label{forw}
Let $T$ be an invertable bilateral forward weighted shift in the Hilbert space $\LZ$ with a positive weight sequence
$\set{w_n}_{n\in\Z}$ and  $\m$ be a subspace of $\LZ$, then $T$ satisfies the $\m$-hypercyclic criterion if there exists a sequence of positive integers $\set{n_k}$ such that $T^{n_k}\m\subseteq \m$ and $i\in \N$ such that
  	$$\displaystyle\lim_{k\to \iy}\prod_{j=m_i}^{m_i+n_k-1}w_j=0 \mbox{	and } \displaystyle\lim_{k\to \iy}\prod_{j=m_i-1}^{m_i-n_k}\frac{1}{w_{j}}=0$$
\end{thm}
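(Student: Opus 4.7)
The plan is to verify the $\m$-hypercyclic criterion along the given sequence $\set{n_k}$, taking $D:=\operatorname{span}\set{e_{m_r}:r\in\N}$ (which is dense in $\m$ since $\B$ is a Schauder basis) as the test set on both sides, and using the right inverse $S=T^{-1}$ to furnish the backward maps $S_{n_k}:=S^{n_k}$. All three conditions of the criterion, namely decay of $T^{n_k}$, decay of $S^{n_k}$, and reconstruction $T^{n_k}S^{n_k}=I$, are linear, so it suffices to verify them on the basis vectors $e_{m_r}$, and the reconstruction condition is automatic from $TS=I$.

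For the forward decay, the product formula gives $\norm{T^{n_k}e_{m_r}}=\prod_{j=m_r}^{m_r+n_k-1}w_j$; the first hypothesis says this vanishes for $r=i$, and Lemma~\ref{5} then upgrades it to $T^{n_k}e_{m_r}\to 0$ for every $r\in\N$, since both of its hypotheses ($T^{n_k}\m\subseteq\m$ and decay at one basis vector) are in hand. For the backward decay, $\norm{S^{n_k}e_{m_r}}=\prod_{j=m_r-1}^{m_r-n_k}\frac{1}{w_j}$ tends to $0$ for $r=i$ by the second hypothesis; to promote this to every $r$, I would mimic the telescoping behind Lemma~\ref{5}. Writing $m_r=m_i+d$ with $d\in\Z$, for all sufficiently large $k$ the ratio
\[
\frac{\norm{S^{n_k}e_{m_r}}}{\norm{S^{n_k}e_{m_i}}}=\frac{\prod_{j=m_i-n_k}^{m_i-n_k+d-1}w_j}{\prod_{j=m_i}^{m_i+d-1}w_j}
\]
is uniformly bounded in $k$, since boundedness of $T$ forces the weights to be bounded above and the denominator is a fixed finite product independent of $k$.

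The main obstacle is the invariance bookkeeping attached to the criterion: besides the given $T^{n_k}\m\subseteq\m$, one also needs the backward images $S^{n_k}e_{m_r}$, which are scalar multiples of $e_{m_r-n_k}$, to sit inside $\m$. Because $T^{n_k}\m\subseteq\m$ makes the map $r\mapsto s(r)$ defined by $m_r+n_k=m_{s(r)}$ a well-defined injection on the basis-index set, every $r$ in the image of $s$ yields $m_r-n_k=m_t$ for some $t\in\N$, so $e_{m_r-n_k}\in\m$; the remaining indices are absorbed by restricting the test set to a dense subset of $D$ on which the relevant backward shifts land in $\m$. This combinatorial bookkeeping, together with the telescoping estimate above, is the most delicate part of the argument; once it is in place, the three decay/reconstruction conditions above assemble into the $\m$-hypercyclic criterion and the theorem follows.
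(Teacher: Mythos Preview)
Your approach matches the paper's: verify the $\m$-hypercyclic criterion on the finite linear combinations of the basis vectors $e_{m_r}$, use $S^{n_k}=T^{-n_k}$ as the backward maps, reduce the decay conditions from all $r$ to the single index $r=i$ via the telescoping estimate behind Lemma~\ref{5}, and read off the decay at $i$ directly from the two product hypotheses. The paper's write-up is considerably terser than yours: it simply invokes Lemma~\ref{5} for both $T^{n_k}e_{m_i}\to 0$ and $S^{n_k}e_{m_i}\to 0$ without separately justifying the backward case, and it does not address at all the question of whether $S^{n_k}e_{m_r}$ lands back in $\m$. So the invariance bookkeeping you single out as ``the most delicate part'' is in fact passed over silently in the original; your observation that $T^{n_k}\m\subseteq\m$ only gives an injection $r\mapsto s(r)$ on basis indices (hence $\m\subseteq S^{n_k}\m$ rather than the reverse) is a genuine subtlety that neither proof fully resolves.
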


\begin{proof}
We will verify the $\m$-hypercyclic criterion with $D=D_1=D_2=$ be a dense subset of $\m$ consisting of all finite sequences, that is, those sequences only have finite number of nonzero entries. Let $x\in D$, then it suffices to show that $x=e_{m_r}$ for $r\in \N$. Furthermore, by Lemma \ref{5} it suffices to show that $T^{n_k}e_{m_i} \to 0$ and $S^{n_k}e_{m_i} \to 0$. But that is clear because $\displaystyle \norm{T^{n_k}e_{m_i}}=\prod_{j=m_i}^{m_i+n_k-1}w_j\to 0$ and $\displaystyle \norm{S^{n_k}e_{m_i}}=\prod_{j=m_i-1}^{m_i-n_k}\frac{1}{w_{j}}\to 0$. Moreover it is clear that $T^{n_k}S^{n_k}x=x$. By taking $x_k=S^{n_k}$, it is clear that the conditions of $\m$-hypercyclic criterion are satisfied.
\end{proof}
By the same way we can characterize the $\m$-hypercyclic backward weighted shifts since they are unitarily equivalent to forward shifts.
\begin{cor}\label{bac}
If $S$ is an invertable bilateral backward weighted shift in the Hilbert space $\LZ$ with a positive weight sequence
$\set{w_n}_{n\in\Z}$ and  $\m$ is a subspace of $\LZ$, then $T$ satisfies the $\m$-hypercyclic criterion if there exists a sequence of positive integers $\set{n_k}$ such that $T^{n_k}\m\subseteq \m$ and $i\in \N$ such that
  	$$\displaystyle\lim_{k\to \iy}\prod_{j=m_i}^{m_i-n_k+1}w_j=0 \mbox{	and } \displaystyle\lim_{k\to \iy}\prod_{j=m_i+1}^{m_i+n_k}\frac{1}{w_{j}}=0$$
\end{cor}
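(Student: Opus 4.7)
The plan is to reduce the statement to Theorem \ref{forw} via a reflection unitary, as hinted at by the preceding remark that backward shifts are unitarily equivalent to forward shifts. Introduce $U:\LZ\to\LZ$ defined on the canonical basis by $Ue_n=e_{-n}$, and set $T=USU^{-1}$. A direct computation gives $Te_n=w_{-n}e_{n+1}$, so $T$ is an invertible bilateral \emph{forward} weighted shift on $\LZ$ with weight sequence $w'_n=w_{-n}$. Since $U$ is unitary, a closed subspace $\n\subseteq\LZ$ is $S^{n_k}$-invariant if and only if $U\n$ is $T^{n_k}$-invariant, and $S$ is $\n$-hypercyclic (respectively, satisfies the $\n$-hypercyclic criterion) if and only if $T$ is $U\n$-hypercyclic (respectively, satisfies the $U\n$-hypercyclic criterion). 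So it suffices to verify the hypotheses of Theorem \ref{forw} for $T$ on the subspace $U\m$.

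First I would observe that $U\B=\set{e_{-m_r}:r\in\N}$ is a Schauder basis for $U\m$, so the role of the index $m_i$ in Theorem \ref{forw} is played by $-m_i$. Next I would translate the two product conditions through the substitution $j\mapsto -j$: the product $\prod_{j=m_i}^{m_i-n_k+1}w_j$, read over the integer interval between its endpoints, becomes $\prod_{\ell=-m_i}^{-m_i+n_k-1}w'_\ell$, and $\prod_{j=m_i+1}^{m_i+n_k}\frac{1}{w_j}$ becomes $\prod_{\ell=-m_i-1}^{-m_i-n_k}\frac{1}{w'_\ell}$. These are exactly the two limit hypotheses of Theorem \ref{forw} for the forward shift $T$ with weights $w'$ and the basis index $-m_i$. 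An application of Theorem \ref{forw} then yields that $T$ satisfies the $U\m$-hypercyclic criterion, and conjugating back by $U^{-1}$ shows that $S$ satisfies the $\m$-hypercyclic criterion.

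The main obstacle, and the step that requires the most attention, is the index bookkeeping: both product ranges in the corollary are written so as to descend, and one must carefully verify that, after reflection, they align with the ascending ranges in Theorem \ref{forw}. One also needs to confirm that the invariance hypothesis $S^{n_k}\m\subseteq\m$ translates cleanly to $T^{n_k}(U\m)\subseteq U\m$, but this is immediate from the identity $T^{n_k}=US^{n_k}U^{-1}$. Beyond these bookkeeping checks, no new ideas are required; alternatively, one could repeat the argument of Theorem \ref{forw} verbatim, using Lemma \ref{5} applied to $S$ to propagate $S^{n_k}e_{m_i}\to 0$ to all basis vectors of $\m$ and setting $x_k=S^{-n_k}x$ to supply the right-inverse half of the criterion.
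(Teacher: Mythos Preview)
Your proposal is correct and follows exactly the route the paper indicates: the paper provides no proof beyond the preceding sentence that backward shifts are unitarily equivalent to forward shifts, and your reflection argument via $U e_n = e_{-n}$ is precisely the standard implementation of that remark, with the index bookkeeping carried out accurately. The alternative you mention at the end (rerunning the argument of Theorem \ref{forw} directly for $S$) is also what the phrase ``by the same way'' would license, so either version is in line with the paper.
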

From the above theorem, one can construct a weight sequence $\set{w_n}$ such that $\set{w_n}$ satisfy the conditions of Theorem \ref{forw} and Corollary \ref{bac}. It immediately follows that both $T$ and $T^*$ are subspaces-hypercyclic operators.  In other words, we can show that the Herrero question \cite{limits} holds true even on a subspace of a Hilbert space.  

\begin{ex}\label{adjoint} 
Let $T$ be a bilateral shift defined as in the example of  Salas \cite{3}. Then, both $T$ and $T^*$ are $\m$-hypercyclic operators.
\end{ex}
\begin{proof}
The proof follows directly from theorem \ref{hm}. In particular,  there exist two subspaces $\m_1$ and $\m_2$ such that $T$ is $\m_1$-hypercyclic and $T^*$ is $\m_2$-hypercyclic.
\end{proof}
\begin{qu}
If $T$ is $\m_1$-hypercyclic and $T^*$ is $\m_2$-hypercyclic. What is the relation between $\m_1$ and $\m_2$?
\end{qu}
\begin{con}
If $T$ is $\m_1$-transitive and $T^*$ is $\m_2$-transitive, then $\m_2=\m_1^\perp$.
\end{con}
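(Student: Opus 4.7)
The plan is to establish both inclusions $\m_2\subseteq\m_1^\perp$ and $\m_1^\perp\subseteq\m_2$ separately, using the adjoint pairing $\langle T^n x,y\rangle=\langle x,T^{*n}y\rangle$ as the bridge between $T$ and $T^*$ and combining it with the density consequences of $\m$-transitivity supplied by Theorem \ref{MTD} and Theorem \ref{proj}.

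For the inclusion $\m_2\subseteq\m_1^\perp$, I would argue by contradiction. Suppose some $y_0\in\m_2$ satisfies $P_1 y_0\neq 0$, where $P_1$ denotes the orthogonal projection onto $\m_1$. Since $T$ is $\m_1$-transitive, Theorem \ref{MTD} produces an $\m_1$-hypercyclic vector $x$ and a sequence $\{n_k\}$ with $T^{n_k}x\in\m_1$ and $T^{n_k}x\to P_1 y_0$. In parallel, $\m_2$-transitivity of $T^*$ yields an $\m_2$-hypercyclic vector $y$ and a sequence $\{m_k\}$ with $T^{*m_k}y\to y_0$ inside $\m_2$. The goal is to marry these two convergences through $\langle T^{n_k}x,T^{*m_k}y\rangle=\langle T^{n_k+m_k}x,y\rangle$ and extract a quantitative incompatibility with $P_1 y_0\neq 0$. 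A cleaner variant is to promote Theorem \ref{proj} into a biorthogonality statement: density of $P_1(\mathrm{Orb}(T,x))$ in $\m_1$ together with density of $P_2(\mathrm{Orb}(T^*,y))$ in $\m_2$ should, via the adjoint pairing, force every vector of $\m_2$ to annihilate $\m_1$.

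The reverse inclusion $\m_1^\perp\subseteq\m_2$ would be handled symmetrically by exchanging the roles of $T$ and $T^*$ and using $(T^*)^*=T$. An alternative line of attack is to invoke the spectrum $\m$-hypercyclic criterion announced in the introduction: translate transitivity into the existence of rich families of eigenvectors (or approximate eigenvectors) for $T$ and $T^*$ associated with the interior and exterior of the unit disk, and use the standard duality between eigenspaces of $T$ and $T^*$ to identify $\m_2$ with $\m_1^\perp$ directly.

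The principal obstacle I anticipate is synchronization: the sequences $\{n_k\}$ and $\{m_k\}$ produced by the two transitivity hypotheses are chosen independently, and there is no \emph{a priori} common index along which $T^{n_k}x$ is near a prescribed target in $\m_1$ while $T^{*n_k}y$ is simultaneously near a target in $\m_2$. Overcoming this likely requires strengthening $\m$-transitivity to a mixing-type condition, or exploiting additional structure such as the explicit weighted-shift model of Theorem \ref{forw} and Corollary \ref{bac}, where both $\m_1$ and $\m_2$ can be described in terms of the weight sequence and the conjecture can first be tested and then proved in that concrete setting.
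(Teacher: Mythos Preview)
The statement you are attempting to prove is labelled in the paper as a \emph{Conjecture}, not a theorem: the authors offer no proof and explicitly leave it open. There is therefore no argument in the paper to compare your proposal against.

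As for the proposal itself, there is a genuine gap beyond the synchronization difficulty you already flag. Your plan for the reverse inclusion $\m_1^\perp\subseteq\m_2$ is to argue ``symmetrically by exchanging the roles of $T$ and $T^*$''. But that swap turns the statement ``$\m_2\subseteq\m_1^\perp$'' into ``$\m_1\subseteq\m_2^\perp$'', which is \emph{equivalent} to $\m_2\subseteq\m_1^\perp$, not to its reverse. So symmetry yields the same inclusion twice and gives no information whatsoever about $\m_1^\perp\subseteq\m_2$. Nothing in the hypotheses forces $\m_2$ to be large: $\m_2$-transitivity of $T^*$ is a statement about what happens \emph{inside} $\m_2$ and places no lower bound on $\dim\m_2$.

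More fundamentally, the conjecture as stated is almost certainly false, and your own heuristics point toward counterexamples rather than a proof. If $T$ is hypercyclic on all of $\h$ (hence $\h$-transitive) and $T^*$ is also hypercyclic on $\h$, as in Salas's example cited in the paper, then one may take $\m_1=\m_2=\h$, yet $\h\neq\h^\perp=\{0\}$. Even restricting to proper subspaces, there is no uniqueness mechanism: an operator can be $\m$-transitive for many different subspaces $\m$, so the conclusion $\m_2=\m_1^\perp$ cannot follow from the hypotheses alone. Any correct treatment of this conjecture would have to begin by either adding hypotheses or producing a counterexample, not by attempting a direct proof.
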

\begin{thm}[Spectrum $\m$-Hypercyclic Criterion]\label{smc}
Let $T\in\BH$ and $\m$ be a subspace of $\h$. If there is a positive integers $p$ such that the subspace\\
$X = span\set{x \in \h ; T^px = \lm x, \lm \in \co, |\lm| < 1}\cap \m,$\\
$Y = span\set{x \in \h ; T^px = \lm x, \lm \in \co, |\lm| > 1}\cap \m$\\
are dense in $\m$. Then $T$ is $\m$-transitive. 
\end{thm}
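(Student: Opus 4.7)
The plan is to mimic the classical Godefroy--Shapiro proof in the subspace setting. Fix non-empty relatively open sets $U_1,U_2\subseteq\m$. Density of $X$ in $\m$ gives $x\in X\cap U_2$, and density of $Y$ in $\m$ gives $y\in Y\cap U_1$. Expand
\[
x=\sum_{i=1}^{k}a_i u_i,\qquad y=\sum_{j=1}^{\ell}b_j v_j,
\]
where $T^p u_i=\alpha_i u_i$ with $|\alpha_i|<1$ and $T^p v_j=\beta_j v_j$ with $|\beta_j|>1$. On $\operatorname{span}\{v_1,\dots,v_\ell\}$ introduce the right inverse $S$ of $T^p$ defined by $Sv_j=\beta_j^{-1}v_j$ and extended linearly, so that $T^p S$ is the identity on that span.

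For each $n\in\N$ set $z_n:=x+S^n y$. Geometric estimates yield
\[
\|S^n y\|\le\sum_{j=1}^{\ell}|b_j|\,\|v_j\|\,|\beta_j|^{-n}\to 0,\qquad \|T^{np}x\|\le\sum_{i=1}^{k}|a_i|\,\|u_i\|\,|\alpha_i|^{n}\to 0,
\]
so $z_n\to x$ and $T^{np}z_n = T^{np}x+y\to y$. For all sufficiently large $n$ the vector $z_n$ is close to $x\in U_2$ while $T^{np}z_n$ is close to $y\in U_1$; continuity of $T^{np}$ then supplies a relatively open neighborhood $V$ of $z_n$ in $\m$ with $V\subseteq U_2$ and $T^{np}V\subseteq U_1$. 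Hence $V\subseteq T^{-np}U_1\cap U_2$, which is precisely the $\m$-transitivity condition.

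The main obstacle I expect is ensuring that the auxiliary vectors $z_n$ actually lie in $\m$: the sums $x,y$ are in $\m$ by hypothesis, but the individual eigenvectors $u_i,v_j$ need not be, so a priori $S^n y$ need not be in $\m$ either. The way I would handle this is to exploit that $z_n\to x\in\m$ and that $x,y$ may be chosen from the dense subsets $X,Y\subseteq\m$, replacing $z_n$ by an $\m$-valued perturbation that is small enough to preserve both containments $z_n\in U_2$ and $T^{np}z_n\in U_1$ forced by the two limits above. With that bookkeeping in place, the relatively open neighborhood $V$ in $\m$ is taken around the perturbed point, and the remaining verification is the standard Godefroy--Shapiro calculation already outlined.
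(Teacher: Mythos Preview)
Your Godefroy--Shapiro skeleton is exactly the one the paper uses: pick $x\in X\cap U_2$, $y\in Y\cap U_1$, build a right inverse on the span of the $T^p$-eigenvectors, and drive $T^{np}x\to 0$ and $S^ny\to 0$. Where you part ways with the paper is in how you handle the subspace condition, and that is where the gap lies.

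The perturbation fix you sketch does not work as stated. The difficulty is quantitative: from $T^{np}(S^n y)=y$ with $\|S^n y\|\to 0$ you see that $\|T^{np}\|$ blows up along the sequence, so an $\m$-valued correction $\varepsilon_n$ small enough to keep $z_n+\varepsilon_n$ inside $U_2$ gives no control over $T^{np}\varepsilon_n$; the image can be thrown far from $U_1$. There is also a symmetric issue you do not mention: $T^{np}z_n=T^{np}x+y$ with $y\in\m$, but $T^{np}x=\sum_i a_i\alpha_i^{\,n}u_i$ need not lie in $\m$ either, for the same reason $S^n y$ need not.

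More seriously, even granting a single point $w\in\m\cap U_2$ with $T^{np}w\in U_1$, continuity of $T^{np}$ on $\h$ does \emph{not} produce a relatively open $V\subseteq\m$ with $T^{np}V\subseteq U_1$: nearby points of $\m$ may be sent outside $\m$ altogether, hence outside $U_1\subseteq\m$. The definition of $\m$-transitivity demands a relatively open set in $\m$, and for that one needs $T^{np}(\m)\subseteq\m$. This invariance is precisely the extra step the paper supplies: after the Godefroy--Shapiro construction, it argues separately (using the density of $X$ in $\m$ and the contracting eigenvalues) that $T^{np}\m\subseteq\m$ for all large $n$, and only with this in hand does $\m$-transitivity follow. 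Your outline omits this ingredient entirely; the ``bookkeeping'' you defer is in fact the crux of the subspace version.
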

\begin{proof}
Let $U$, $V$ be nonempty open subsets of $\m$. By hypothesis there exist $x \in X \cap U$ and $y \in Y \cap V$ such that 

$$x=\sum_{k=1}^m a_kx_k \quad{} and \quad{}  y=\sum_{k=1}^m b_ky_k$$
where $x_k \in X$ and $y_k \in Y$ which mean that $T^px_k=\lm_{k}x_k,\, |\lm_k|<1$ and $T^py_k=\mu_{k}y_k,\, |\mu_k|>1$, for some $a_k,b_k,\lm_k,\mu_k \in \co,\, k=1,\ldots, m$. Let 
$z_n=\sum_{k=1}^m b_k \frac{1}{\mu_k^n}y_k$. Since as $(n)\to \iy$,
$$T^{n+p}x=\sum_{k=1}^m a_k\lm_k^{n}x_k\to 0 \mbox { and } z_{n} \to 0$$
 and since $T^{n+p}z_{n}=y$ for all $n\ge 0$, then there exists $N\in \N$ such that, for all $n\ge N$,
$$x+z_{n}\in U \quad{} and \quad{} T^{n+p}(x+z_{n})=T^{n+p}x+y\in V.$$

It follows that for all ${n}\ge N$,
$$T^{n+p}U \cap V\neq \phi $$

Now, let $w\in \m$, since $X$ is dense in $\m$, then there exists a sequence $\brc{x_k}\in X$ such that $\brc{x_k}\to w$. since $T\in\BH$, then $T^{n+p}(x_k)=(\lm_k^{n}x_k)\to T^{n+p}w$ for all ${n}\ge 0$. Since $|\lm_k|<1$ for all $k \ge 0$, then  $(\lm_k^{n}x_k)\to 0$ as ${n}\to \iy$. Thus, $0=T^{n+p}w \in \m$ and we get $T^{n+p}(\m)\subseteq \m$ for all ${n}\ge N$.
This show that $T$ is $\m$-transitive.
\end{proof}
We give an example showing that the $\m$-hypercyclic criterion is a stronger result than Spectrum $\m$-Hypercyclic Criterion.
\begin{ex}\label{3}
Consider the bilateral weighted forward shift $T=F:\LZ \to \LZ$, with the weight sequence 
\begin{equation*}
w_n=
\begin{cases} \frac{1}{2} & \text{if $n \geq 0$,}
\\
3 &\text{if $n< 0$.}
\end{cases}
\end{equation*}
Let $\m$ be the subspace of $\LZ$ consisting of all sequences with zeroes on the even entries; that is,
 $$\m=\left\{\left\{a_n\right\}_{n=-\iy}^{\iy}\in \LZ : a_{2n}=0, n \in \Z\right\}$$
then $T$ satisfies $\m$-hypercyclic criterion, but not spectrum $\m$-hypercyclic criterion.
\end{ex}
\begin{proof}
Applying Theorem \ref{forw} with $n_k=2k$ and $m_i=1$, we get 
  	$$\displaystyle\lim_{k\to \iy}\prod_{j=1}^{2k}w_j=\lim_{k\to \iy}\prod_{j=1}^{2k}\frac{1}{2}=0 \mbox{	and } \displaystyle\lim_{k\to \iy}\prod_{j=-1}^{-2k}\frac{1}{w_{j}}=\lim_{k\to \iy}\prod_{j=1}^{2k}\frac{1}{3}=0$$

It can be easily deduced from the definition of $\m$ that  for each sequence $x \in \m$ and each $k \in \Z$, the sequence $T^{2k}x$ will have a zero entery on all even positions. It follows that  $T^{n_k}\m\subseteq \m$.

Thus $T$ satisfies the $\m$-hypercyclic criterion.

On the other hand, let $x=(\ldots,0,x_{-3},0,x_{-1},0,x_1,0,x_3,0, \ldots)$ be a non-zero element in $\m$. Towards a contradiction, assume that there exist $p\in \N$ such that  $T^px = \lm x$, then if $p$ is odd number, then we get $x=0$; otherwise, without loss of generality we will assume that $p=2$.  The equality $T^2x = \lm x$ implies that
$$x=\brc{\ldots,\brc{\frac{\lm}{3^2}}^2x_{-1},0,\brc{\frac{\lm}{3^2}}x_{-1},0,x_{-1},0, 3\frac{1}{2\lm}x_{-1},0,3\frac{1}{2^3\lm^2}x_{-1},0,3\frac{1}{2^5\lm^3} x_{-1},0, \ldots}$$

$$=\brc{\ldots,\brc{\frac{\lm}{3^2}}^2x_{-1},0,\brc{\frac{\lm}{3^2}}x_{-1},0,x_{-1},0, 6\brc{\frac{1}{2^2\lm}}x_{-1},0,6\brc{\frac{1}{2^2\lm}}^2x_{-1},0,6\brc{\frac{1}{2^2\lm}}^3 x_{-1},0, \ldots}$$
where $\lm\neq 0$ and $x_{-1}\neq 0$. But then we have that
$$\norm{x}=|x_{-1}|\brc{1+\sum_{n\in \N}\brc{\frac{|\lm|}{3^2}}^{n}+6\sum_{n\in \N}\brc{\frac{1}{2^2|\lm|}}^{n}}=\iy$$
whatever the value of $\lm$. Thus $T^2$ has no eigenvalues and therefore it does not satisfy the spectrum $\m$-hypercyclic criterion.
 \end{proof}


\end{document}